\newcommand{\D}{{\mathbb{D}}}
\newcommand{\R}{{\mathbb{R}}}
\newcommand{\SD}{{\mathcal{D}}}
\newcommand{\SE}{{\mathcal{E}}}
\newcommand{\SW}{{\ker(\SD)}}
\newcommand{\SU}{{\mathcal{U}}}
\newcommand{\SF}{{\mathcal{F}}}
\newcommand{\ST}{{\mathcal{T}}}
\newcommand{\NS}{{\mathbb{S}}}
\newcommand{\Op}{{\mathcal{O}p}}
\newcommand{\e}{\varepsilon}
\newcommand{\dd}{\partial}
\newcommand{\sse}{\subseteq}
\newcommand{\lr}{\longrightarrow}
\newcommand{\Mon}{\operatorname{Mon}}
\newcommand{\std}{{\operatorname{std}}}
\newtheorem{proposition}{Proposition}
\newtheorem{theorem}[proposition]{Theorem}
\newtheorem{definition}[proposition]{Definition}
\newtheorem{lemma}[proposition]{Lemma}
\title{Classification of Engel Knots}
\subjclass[2010]{Primary: 57R17. Secondary: 57Q45, 53D35.}
\author{Roger Casals}
\address{Massachusetts Institute of Technology, Department of Mathematics, 77 Massachusetts Avenue Cambridge, MA 02139, USA}
\email{casals@mit.edu}
\author{\'Alvaro del Pino}
\address{Utrecht University, Department of Mathematics, Budapestlaan 6, 3584 Utrecht, The Netherlands}
\email{a.delpinogomez@uu.nl}
\begin{document}

\begin{abstract}
Let $(M,\SD)$ be an Engel 4-fold, we show that the scanning map from the space of Engel knots to the space of formal Engel knots is a weak homotopy equivalence when restricted to the complement of the $\ker\SD$-orbits . This is a relative, parametric and $C^0$-close h-principle.
\end{abstract}
\maketitle

\section{Introduction}
A maximally non--integrable $2$--distribution $\SD$ in a smooth 4-fold $M$ is called an Engel structure. This class constitutes one of the four possible geometries of topologically stable distributions, the other three being contact structures, even--contact structures, and vector fields \cite{ArGi,Ca}. In \cite{CP3} the authors studied the homotopy type of the space of Engel structures on a given smooth 4-fold and showed that there exists an Engel structure in each formal class.
% The upcoming works \cite{CP2,PV} identify classes of Engel structures whose behaviour is strictly algebraic topological, in line with contact geometry \cite{BEM}.
The relative problem of classifying integral submanifolds is also of central interest \cite{AIM}: it was unknown whether the space of Engel knots abides by geometric or, rather, topological constraints. This is the problem we address and solve in this article.

In precise terms, consider the space of Engel knots
$$\SE(M,\SD) = \left\{\gamma\in C^\infty(S^1,M): \gamma \mbox{ embedding}, \gamma'(t) \in \SD_{\gamma(t)}\right\}$$
and its formal analogue, the space of formal Engel knots, where the derivative is decoupled:
$$ \SE^f(M,\SD) = \left\{(\gamma,(F_s)_{s\in[0,1]})\in C^\infty(S^1,M)\times\Mon(TS^1,\gamma^*(TM)):
		\begin{array}{l}
		\gamma \mbox{ embedding}, F_0=d\gamma,\\
		F_1\in\Mon(TS^1,\gamma^*(\SD))
		\end{array} \right\}.$$
The spaces $\SE(M,\SD)$ and $\SE^f(M,\SD)$ are endowed with the Whitney $C^\infty$-topology \cite{EM,GG}. There exists a continuous forgetful map
$$s: \SE(M,\SD) \lr \SE^f(M,\SD),\quad s(\gamma) =(\gamma,d\gamma),$$
which will be referred to as the scanning map \cite{Se}. $\SD$ contains a line field $\SW$ uniquely defined by the condition $[\SW,[\SD,\SD]] \subset [\SD,\SD]$. Its orbits, which in particular are Engel knots, often present rigid behaviour \cite{BH,PP}, so the scanning map $s$ cannot possibly be a weak homotopy equivalence. By discarding the $\SW$--orbits, the $h$--principle can be salvaged:
\begin{theorem}\label{thm:main}
Let $(M,\SD)$ be an Engel 4-manifold and $\SE^g(M,\SD) \subseteq \SE(M,\SD)$ the subspace of Engel knots that are not everywhere tangent to $\SW$. Then, the scanning map 
\[ s:\SE^g(M,\SD) \lr \SE^f(M,\SD) \]
is a weak homotopy equivalence. In addition, the h--principle is $C^0$--close and relative in the parameter and the domain.
\end{theorem}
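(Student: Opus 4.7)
The plan is to reduce the global h-principle to a local construction via standard parametric h-principle machinery. Being an Engel knot is a first-order, $\op{Diff}$-invariant condition, so once one establishes a sufficiently flexible local ``solution move'' --- namely a procedure that, in a small neighborhood and in the presence of a formal Engel section, produces a genuine Engel curve realizing the prescribed formal tangent data --- the parametric, relative, and $C^0$-close statement should follow from a holonomic approximation argument applied cell by cell over the CW parameter space.

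For the local construction I would fix Darboux coordinates $(x,y,z,w)$ on a neighborhood of a point of $\gamma$ where $\gamma'\notin\SW$, so that $\SD$ is spanned by $\dd_w$ and $X = \dd_x + w\dd_y + y\dd_z$, while $\SW = \langle \dd_w\rangle$. Curves transverse to $\SW$ are then locally graphical over the $x$-axis, and an Engel curve of the form $\gamma(x) = (x,y(x),z(x),w(x))$ is determined by the single function $w(x)$, with the equations $y'=w$ and $z'=y$ imposed by the Engel condition. This is the ``front'' picture, in analogy with the Legendrian setting: the image in $(x,y,z)$ looks like a front in the contact 3-space $\{dz - y\,dx = 0\}$, but with the additional flexibility that $w$ is a free degree of freedom rather than being pinned down as the slope of the front. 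In this setting I would implement a convex-integration / wrinkling move: given a formal Engel section along $\gamma$, one replaces the $w$-component by an oscillatory function whose averaged effect on $y$ and $z$ matches the prescribed formal tangent data. Because the admissible directions for a genuine Engel curve fill the full plane $\SD_p$ at each point, and because $\SD\sm\SW$ is open, the move can be arranged to keep the modified curve transverse to $\SW$.

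The remaining work is parametric and relative. One must patch such local moves over a cover of the domain and a compact family of parameters, while preserving the embedding property, the $C^0$-closeness, and holonomicity on the closed subsets along which the h-principle is declared relative. The principal obstacle, in my view, is precisely this combined control: the $w$-oscillations introduced by the move must have amplitudes small enough to guarantee both transversality to $\SW$ and absence of self-intersections uniformly in the parameter, while matching prescribed genuine data on the relative subset. A standard inductive scheme on a finite good cover of $S^1$, combined with the openness of the embedding and transversality conditions in the Whitney topology, should then close the argument.
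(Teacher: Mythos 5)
Your local picture (Darboux chart, curves graphical over the $x$-axis away from $\SW$, the whole curve encoded by one free function, oscillate that function to hit prescribed data) is the right geometric setting, and the reduction-to-charts skeleton broadly matches the paper's reduction step. But the argument as written has a genuine gap at its center: the Engel tangency condition is a \emph{closed}, non-open relation, so neither holonomic approximation nor convex integration can produce genuine Engel knots directly --- they only yield $\e$-approximate solutions. (The paper makes exactly this move explicit: convex integration for the ample open $\e$-relaxed relation gives Proposition \ref{prop:epsilonhprinciple}, and the entire remaining work is converting $\e$-Engel knots into honest ones.) Your ``oscillatory $w$'' step must satisfy two nested integral constraints \emph{exactly} (in the paper's coordinates, $w=z'/x'$ and $y=\int z\,dx$, with matching at the endpoints of each chart and around $S^1$), not just on average, and the proposal does not explain how a single oscillation achieves both; the paper needs two separate mechanisms (slope adjustment by wrinkles, then area controllers built from cancelling pairs of nested Reidemeister I moves).

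The deeper missing idea is the relative-in-parameter step, which is where the Engel-specific geometry enters and where a ``standard inductive scheme'' genuinely fails. On $\Op(\partial\D^k)$ the family already consists of genuine Engel knots, and the oscillations (wrinkles) you need in the interior must be introduced \emph{through} a path of genuine Engel knots starting from those boundary knots. In the Legendrian analogue this is precisely stabilization, which is \emph{not} an isotopy --- this is why Fuchs--Tabachnikov only holds up to stabilization, and why your scheme, pushed through, would prove at best the analogue of Theorem \ref{thm:ft} rather than Theorem \ref{thm:main}. The paper's key observation is that the wrinkling movie, whose Legendrian lift under $\pi_G$ momentarily acquires a double point at a self-tangency of the front, lifts to an \emph{embedded} isotopy of Engel knots because the extra coordinate $y=\int z\,dx$ separates the two branches (the enclosed area is nonzero). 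Without identifying and proving this, the openness of the embedding and transversality conditions does not ``close the argument'': openness lets you preserve properties under small perturbations, but the required perturbation here is not small in the relevant sense --- it must interpolate between unwrinkled and wrinkled fronts within the class of genuine Engel embeddings. You should also justify why the curves everywhere tangent to $\SW$ must be excluded; since your scheme never uses the hypothesis $\gamma\in\SE^g$ in an essential way, as stated it would ``prove'' the false unrestricted h-principle.
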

The proof relies on the h--principle for $\varepsilon$--Engel knots, stated in Proposition \ref{prop:epsilonhprinciple}, which allows us to manipulate families of knots in local projections \cite{Ge,PP}. This is similar in spirit to the classification of loose Legendrian knots \cite{Mu}.  

While the set of all Engel knots is infinite--dimensional, the set of $\SW$--orbits has finite dimension. Even better, for a $C^\infty$--generic Engel structure the set of $\SW$--orbits is discrete. Additionally, $C^\infty$--perturbations of the Engel structure change drastically the homotopy type of the space of $\SW$--orbits \cite{PP}, so our theorem detects all the data persistent under Engel homotopies. In this sense, Theorem \ref{thm:main} essentially classifies all Engel knots. This is in stark contrast with the classification of Legendrian knots \cite{Ch,Gr}, which is still an area of active research.

The set of connected components $\pi_0(\SE^g(\R^4,\SD_\std))$ for the standard Engel structure in $\R^4$ is computed in \cite{Ad,Ge}. In the work \cite{FMP} the fundamental group $\pi_1(\SE^g(\R^4,\SD_\std))$ is computed with methods different from ours. Theorem \ref{thm:main} implies both results. Note also that the homotopy type of the space of formal Engel knots $\SE^f(M,\SD)$ only depends on the formal homotopy class underlying $\SD$ and, as a corollary of Theorem \ref{thm:main}, we obtain that $\SE^g(M,\SD_0)$ and $\SE^g(M,\SD_1)$ are homotopy equivalent if $\SD_0$ and $\SD_1$ are homotopic as formal Engel structures.

\subsection{Acknowledgements} We are grateful to E. Murphy, F. Presas, L.~Traynor and T.~Vogel for insightful discussions. We are also pleased to acknowledge the organizers of the workshop {\it Engel Structures} which took place on April 2017 and to the American Institute of Mathematics for sponsoring it. The problem we solve in this article was one of the questions in the workshop. R.~Casals is supported by the NSF grant DMS-1608018 and a BBVA Research Fellowship, and \'A.~del Pino is supported by the grant NWO Vici Grant No.~639.033.312.

\section{Preliminaries}\label{sec:pre}

Let $M$ be a smooth 4-manifold and $\SD\sse TM$ a 2--distribution, i.e.~a 2--rank subbundle of the tangent bundle. The 2--distribution $\SD$ is said to be Engel if $[\SD,[\SD,\SD]]=TM$, where $[\cdot,\cdot]$ denotes the Lie bracket on vector fields; geometrically, this condition is tantamount to the 2-plane field $\SD$ and the 3-plane field $[\SD,\SD]$ being non-integrable at every point \cite{Fr}. In particular, there exists no embedded surface $\Sigma\sse M$ such that $T\Sigma=\SD$ and the only possible integral submanifolds of $\SD$ can be 1-dimensional integral curves, which exist in abundance \cite{Ca,Ge}.

\subsection{$\varepsilon$--Engel knots} \label{ssec:ehorizontal}

The objects that Theorem \ref{thm:main} classifies are 1-dimensional integral submanifolds of $(M,\SD)$, which belong to the more general set of $\e$-integral curves:

\begin{definition}
Let $(M,\SD)$ be an Engel structure, $g$ a Riemannian metric on $M$, $\e\in\R^+$ and $I$ a 1-dimensional manifold. A curve $\gamma: [0,1] \to M$ is said to be an $\e$--Engel arc if the inequality
$$\measuredangle(\gamma_*TI,\SD)\leq\e$$
holds. In the case where $I=\NS^1$ and $\gamma$ is embedded, $\gamma$ is said to be an $\e$--Engel knot. In addition, a $0$--Engel arc, resp.~knot, i.e.~such that $\gamma_*TI\sse\SD$, is said to be an Engel arc, resp.~knot.
\end{definition}

For a fixed metric $g$ and sufficiently small $\e$, the angle condition $\measuredangle(L,\Pi)\leq\e$ between a line $L$ and a 2--plane $\Pi$ in $\R^4$ implies that the projection $\pi(L)$ of the line onto the plane is an injection and $\measuredangle(L,\pi(L))\leq\e$ as subspaces of $\R^2\cong\langle L,\pi(L)\rangle$. Denote by $\SE^\e(M,\SD)$ the space of $\e$--Engel knots and define its formal counterpart as
\[ \SE^{\e,f}(M,\SD)= \left\{(\gamma,(F_s)_{s\in[0,1]})\in C^\infty(S^1,M)\times\Mon(TS^1,\gamma^*(TM)) \quad|\quad
		\begin{array}{l}
		\gamma \mbox{ embedding},F_0=d\gamma,\\
		\measuredangle((F_1)_*TS^1,\gamma^*(\SD))\leq\e.
		\end{array} \right\}, \]
where both spaces are endowed with the Whitney $C^\infty$-topology. The reason for introducing $\e$-integral submanifolds is that the partial differential relation defining them is open for $\e\in\R^+$, whereas the relation defining Engel knots is not; the openness of the partial differential relation allows us to use the method of convex integration to classify $\e$-Engel knots for $\e\in(0,\pi/2)$:
\begin{proposition} \label{prop:epsilonhprinciple}
The scanning map $s:\SE^\e(M,\SD)\lr\SE^{\e,f}(M,\SD)$ given by $s(\gamma)=(\gamma,d\gamma)$ is a weak homotopy equivalence. In addition, the projection map $\SE^{\e,f}(M,\SD) \lr \SE^f(M,\SD)$ is a homotopy inverse to the inclusion $\SE^f(M,\SD) \lr \SE^{\e,f}(M,\SD)$.
\end{proposition}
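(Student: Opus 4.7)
The proof splits into two independent claims — the h-principle for the scanning map $s : \SE^\e \to \SE^{\e,f}$, and the assertion that the inclusion $\iota$ and the projection are mutually homotopy inverses — which I would treat separately.

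For the h-principle, the plan is to identify $\e$-Engel knots with $A_\e$-directed embeddings, where
\[ A_\e := \{ v \in TM \sm M : \measuredangle(v, \SD_{\pi(v)}) \leq \e \} \sse TM \]
is an open subset of the total space of $TM$. I would then invoke Gromov's parametric h-principle for directed embeddings (as presented by Eliashberg--Mishachev), whose content in this setting reduces to verifying ampleness of the fibers $A_\e \cap T_pM$. Fixing $w \in T_pM$ and decomposing $w = w_\parallel + w_\perp$ according to the metric splitting $T_pM = \SD_p \oplus \SD_p^\perp$, a direct check suffices: for $u \in \SD_p$ with $|u|$ sufficiently large, the vector $v_1 := 2u + 2w_\perp$ lies in $A_\e$ since its $\SD_p^\perp$-component is small compared to its $\SD_p$-component, while $v_2 := -2u + 2w_\parallel \in \SD_p \sse A_\e$, and $\tfrac{1}{2} v_1 + \tfrac{1}{2} v_2 = w$. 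With ampleness in hand, Gromov's theorem yields the weak homotopy equivalence for $A_\e$-directed immersions, and the $C^0$-closeness together with the relative/parametric refinements are intrinsic to convex integration. Upgrading from immersions to embeddings is harmless because $\dim M = 4 \ges 3$, so general position keeps embeddings generic among immersions.

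For the second claim, I would construct an explicit projection $p : \SE^{\e,f}(M,\SD) \lr \SE^f(M,\SD)$. Since $\e < \pi/2$, the orthogonal projection $\pi_\SD : TM \to \SD$ restricts to a fiberwise linear isomorphism on the $\e$-cone, and the linear homotopy $r_\tau(v) := (1-\tau) v + \tau \pi_\SD(v)$ is a strong deformation retraction of the $\e$-cone onto $\SD$ within $TM \sm M$, nowhere zero because the $\SD$-component is preserved. Given $(\gamma, F_s) \in \SE^{\e,f}$, define $p(\gamma, F_s) := (\gamma, \wt F_s)$ by reparametrizing $F$ to $[0, 1/2]$ and appending $s \mapsto r_{2s-1}(F_1)$ on $[1/2, 1]$; this lies in $\SE^f$ because $\wt F_1 = \pi_\SD F_1$ takes values in $\gamma^*\SD$. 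The composition $p \circ \iota$ appends only a trivial tail and is homotopic to the identity on $\SE^f$ by reparametrization. For $\iota \circ p \simeq \Id_{\SE^{\e,f}}$, one continuously interpolates between the paths $F_s$ and $\wt F_s$; at each stage of the interpolation the endpoint is of the form $r_\tau(F_1)$, which still lies in the $\e$-cone, so the homotopy stays in $\SE^{\e,f}$.

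The principal obstacle is the first part: setting up Gromov's h-principle in the form that simultaneously delivers all three refinements (parametric, $C^0$-close, relative) and directly produces embeddings rather than merely immersions. The ampleness computation is elementary once the setup is in place, but the care lies in selecting the correct formulation of the theorem and verifying that the embedding condition is preserved by general position in the parametric setting.
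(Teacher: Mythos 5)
Your proposal follows essentially the same route as the paper: both reduce the first claim to openness and ampleness of the conical fiberwise relation and then invoke Gromov's convex-integration h-principle for ample open (directed embedding) relations, your $v_1,v_2$ computation being just an explicit version of the paper's remark that the convex hull of the $\e$-cone is the whole fiber. Your explicit deformation retraction $r_\tau$ for the second claim supplies detail the paper leaves implicit, but it is the same standard argument, so there is nothing substantively different to report.
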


\begin{proof}
The fiberwise defining condition for an $\e$-Engel knot is given by conical subsets of the form
$$\{(x,y)\in\R^2:y^2\leq (x\e)^2\}\sse\R^2, $$
whose convex hull is the total fiber, thus proving that the relation is ample; see \cite[Section 19.1]{EM} for details. Since the relation is also open, Gromov's h-principle for ample open relations \cite[Theorem A, Section 2.4.3]{Gr} applies and the statement follows.
\end{proof}

\subsection{Local projections}

Given any point $p\in M$ there exist an open set $\Op(p)\sse M$, an open polydisc $U \subset \R^4(x,y,z,w)$, and a diffeomorphism $\phi:\Op(p)\lr U$ satisfying
\[ \phi_*\SD = \SD_\std = \ker\{dy-zdx\}\cap\ker\{dz-wdx\} = \langle \dd_w, \dd_x+z\dd_y + w\dd_z \rangle. \]
The chart $(U,\phi)$ is referred to as a \emph{Darboux chart} and it can be constructed with the Moser path method \cite{Ca}. Note that the Engel structure $(\R^4,\SD_\std)$ is the tautological distribution on $J^2(\R,\R) \cong \R^4$, where the variable $x\in\R$ is a coordinate in the domain, $y\in\R$ a coordinate on the target, and $z$ and $w$ encode the first and second derivatives of the map, respectively. This naturally extends the isomorphism $(\R^3,\xi_\std)\cong(J^1(\R,\R),\xi_0)$ in the case of 3-dimensional contact structures by further adding the second-order information $j^2(f)$ to the 1-jet $j^1(f)$ of the function $f\in C^\infty(\R)$.

This said, there are two natural contact structures associated to $\SD_\std$ given by the contact forms $dy-zdx$, which reduces the second derivative, and $dz-wdx$, where the original function is itself forgotten. Either of these projections can be used to study Engel structures as contact structures with additional information: J.~Adachi \cite{Ad} works with the sequence of projections
$$J^2(\R,\R)(x,y,z,w)\lr J^1(\R,\R)(x,y,z)\stackrel{\pi_\SF}{\lr} J^0(x,y),$$
and H.~Geiges \cite{Ge} uses the alternative sequence
$$J^2(\R,\R)(x,y,z,w)\stackrel{\pi_G}{\lr} J^1(\R,\R)(x,z,w)\stackrel{\pi_\SF}{\lr} J^0(x,z),$$
where $\pi_\SF$ is the standard front projection in contact geometry \cite{ArGi,Ge0}.

In order to prove Theorem \ref{thm:main} we will use the Geiges projection $\pi_G$. Observe that an Engel knot in $(\R^4,\SD_\std)$ is mapped to an immersed Legendrian in $(\R^3,\xi_\std)$ under the projection $\pi_G$. Such a curve can be manipulated in either the front or the Lagrangian projections, both of which will be used in Theorem \ref{thm:main}. Conversely any Engel arc with domain $I$
\[ \gamma(t) = (x(t),y(t),z(t),w(t)): I \lr J^2(\R,\R) \]
can be recovered from either of the two projections
$$(\pi_\SF\circ\pi_G)(x,y,z,w)=(x,z),\qquad (\pi_L\circ\pi_G)(x,y,z,w)=(x,w).$$
Indeed, in the first case the inclusion $\gamma_*TS^1\in\ker\{dy-zdx\}$ allows us to compute the $y$--coordinate as the integral of the 1-form $zdx$ along a path. Similarly, the vanishing of $dz-wdx$ along $\gamma$ states that the $w$--coordinate is the slope of the projected curve. In the alternate case in which we are given the image of $\pi_L\circ\pi_G$, both $z$ and $y$ are recovered by integration. Figure \ref{fig:EngelKnots} illustrates the projections of some Engel arcs, note that the leftmost projection lifts to a closed Legendrian in $(J^1(\R,\R),\xi_\std))$ but only to an open arc in $(J^2(\R,\R),\SD_\std))$.

\begin{center}
\begin{figure}[h!]
\centering
  \includegraphics[scale=0.85]{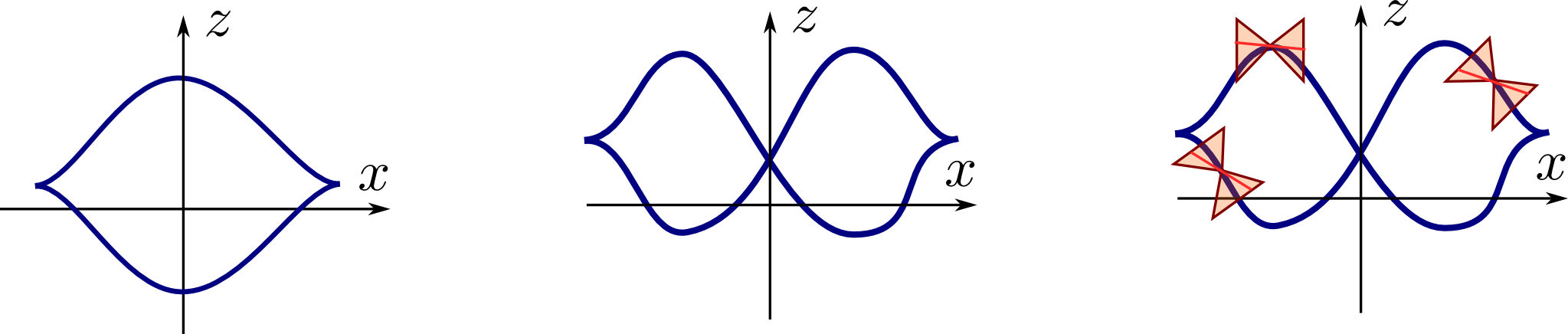}
  \caption{Open Engel arc (Left), a closed Engel knot (Center) and an $\e$-Engel arc (Right).}
  \label{fig:EngelKnots}
\end{figure}
\end{center}

In this line, if we are given the front projection of an $\e$--Engel arc $\gamma$, we are able to recover the missing coordinates up to an error of order $\e$. If the $\e$-error belongs to the derivative, this can be pictorially represented by adding $\e$-cones on top of each point of the front projection, indicating the possible tangent spaces, and a line contained in that $\e$-cone representing the formal slope. Note that for a fixed Darboux chart $(U,\phi)$ with $U \sse M$, there exists $\e\in\R^+$ such that any $\e$--Engel knot intersecting $U$ projects to an immersed $\e$--Legendrian via the projection $\pi_G$ associated to $(U,\SD_\std)$; this $\e$--Legendrian further projects to an immersed planar curve under the Lagrangian projection $\pi_L$.

\subsection{Non--degenerate Engel knots}

Let $(M,\SD)$ be an Engel 4-manifold with kernel $\SW\sse\SD$. In \cite{BH} it is shown that intervals tangent to $\SW$ might possess no deformations relative to their ends as curves tangent to $\SD$. This can be adapted to prove that Engel knots tangent to the line field $\SW$ can form isolated components in the space of Engel knots $\SE(M,\SD)$; these components are geometric and cannot be detected at a formal level, thus the $h$--principle cannot possibly hold for $\SE(M,\SD)$.

Theorem \ref{thm:main} is instead an h-principle for the rest of Engel knots, which constitute the space
$$\SE^g(M,\SD) = \{ \gamma \in \SE(M,\SD):\exists p\in\gamma\mbox{ with }T_p\gamma\cap\SW=\{0\}\}.$$
Its complement $\SE(M,\SD)\setminus\SE^g(M,\SD)$ is actually small, e.g. its Hausdorff dimension is at most 3. Note also \cite{PP} that it is not invariant under Engel deformations and it is a discrete set if $\SD$ is a $C^\infty$--generic Engel structure. As far as Engel \emph{topology}, and not geometry, is concerned, Theorem \ref{thm:main} is sharp.

Engel knots in the space $\SE^g(M,\SD)$ can contain points of tangency with the line field $\SW$. Nevertheless, any subspace in $\SE^g(M,\SD)$ can be assumed to have reasonable tangencies up to $C^\infty$--perturbation:
\begin{lemma}$($\cite[Theorem 1]{PP}$)$\label{lem:genericity}
Let $(M,\SD)$ be an Engel 4-manifold, $K$ a compact parameter space, and $\gamma: K \lr \SE^g(M,\SD)$ a $K$-family of Engel knots. Then there exists a $C^\infty$--perturbation $\tilde\gamma$ of $\gamma$ such that the set 
$$\Sigma=\{(t,k) \in S^1 \times K: \tilde\gamma(k)'(t)\in \ker(\SD)(\tilde\gamma(k)(t))\}$$
is in general position with respect to the projection $S^1 \times K \lr K$.
\end{lemma}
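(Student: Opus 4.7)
The plan is to reformulate the tangency condition as the vanishing of a canonical section of a line bundle over $S^1\times K$ and to invoke parametric Thom jet-transversality in Darboux charts, relying on the flexibility of Engel-preserving variations of $\gamma$.

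First I would consider the quotient line bundle $\SD/\SW$ over $M$ and define the section
\[ \Psi: S^1\times K\lr \gamma^*(\SD/\SW),\qquad \Psi(t,k)=\bigl[\gamma(k)'(t)\bigr]_{\SD/\SW}, \]
which is well-defined because $\gamma(k)'(t)\in\SD$. By construction $\Sigma=\Psi^{-1}(0)$, and the statement of the lemma is equivalent to arranging, after a $C^\infty$-small perturbation of $\gamma$ inside $\SE^g(M,\SD)$, that (i) $\Psi$ is transverse to the zero section, so that $\Sigma$ is a smooth codimension-$1$ submanifold of $S^1\times K$, and (ii) the map $\pi_K|_\Sigma:\Sigma\to K$ is transverse to the Thom-Boardman stratification, which is the standard meaning of being in general position with respect to the projection.

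Both conditions can be achieved locally in Darboux charts. In coordinates $(x,y,z,w)$ with $\SD_\std=\langle\dd_w,\dd_x+z\dd_y+w\dd_z\rangle$, one has $\SW=\langle\dd_w\rangle$, and an Engel arc takes the form $(x(t),y(t),z(t),w(t))$ subject to $y'=zx'$ and $z'=wx'$; the tangency condition $\gamma'(t)\in\SW$ reduces to the scalar equation $x'(t)=0$, so locally $\Psi$ is represented by the function $(t,k)\mapsto x'(t;k)$. The tangent space to Engel-preserving variations of $\gamma$ at a point is freely parametrized by a pair of scalar functions $(X(t),W(t))$, with $Y$ and $Z$ recovered by integration, so arbitrary $C^\infty$-small variations of the $x$-coordinate, hence of $x'$, are attainable. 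Parametric and multi-jet Thom transversality applied to the evaluation map of the jets of $x'(t;k)$ produces a $C^\infty$-small perturbation achieving both (i) and (ii) within the chart; a finite cover of the compact set $\gamma(K\times S^1)\subset M$ by Darboux charts, combined with the relative version of jet-transversality and a partition of unity, globalizes the construction.

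The main obstacle lies in the gluing step: a perturbation of $x(t;k)$ shifts $y$ and $z$ through the integrals $\int wx'$ and $\int zx'$, so it is not automatically compactly supported inside the chart. The standard fix is to enlarge each working sub-interval and to introduce a localized correction of $(X,W)$ in an adjacent sub-interval that absorbs the induced changes in $y$ and $z$. The hypothesis $\gamma(k)\in\SE^g(M,\SD)$, namely that each $\gamma(k)$ has some point with $\gamma(k)'(t)\notin\SW$, is precisely what guarantees uniformly in $k\in K$ the existence of such correction regions (where $x'\neq 0$), so that the gluing survives in the full parametric family and delivers the asserted perturbation $\tilde\gamma$.
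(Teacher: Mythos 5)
The paper does not prove this lemma: it is imported verbatim as \cite[Theorem 1]{PP}, so there is no internal proof to compare yours against. Your outline is nonetheless the expected argument and most of it is sound: viewing $\Sigma$ as the zero locus of the section $(t,k)\mapsto[\gamma(k)'(t)]\in\gamma^*(\SD/\SW)$, identifying it in a Darboux chart with $\{x'(t;k)=0\}$, and exploiting the fact that horizontal curves are freely parametrized by the Geiges/Lagrangian projection data $(x(t),w(t))$, so that parametric (multi-)jet transversality applies to $x'$ --- this is precisely the mechanism that makes the result true even though the perturbation is constrained to the non-open class of Engel knots.

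The one step I cannot accept as written is the localization. You assert that the changes in $y$ and $z$ induced by a compactly supported perturbation of $x$ can be absorbed in an \emph{adjacent} sub-interval where $x'\neq 0$, and that the hypothesis $\gamma(k)\in\SE^g(M,\SD)$ guarantees such regions uniformly. But membership in $\SE^g(M,\SD)$ only provides one point of transversality somewhere on each knot: a knot in $\SE^g(M,\SD)$ may be tangent to $\SW$ along an entire sub-arc, so the interval adjacent to your perturbation region --- indeed the whole Darboux chart you are working in --- may satisfy $x'\equiv 0$. The correction region may then lie several charts away, and the induced shift in the $(y,z)$-coordinates would have to be propagated around the knot through coordinate changes in which the integral formulas $z=\int w\,dx$ and $y=\int z\,dx$ are no longer available (translation in $z$ is not even a symmetry of $\SD_\std$). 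A cleaner repair that stays inside the chart: impose on the perturbation $\delta x$ the two linear constraints $\int w\,d(\delta x)=0$ and $\int z\,d(\delta x)=0$, which cut out a finite-codimension subspace of the space of admissible variations and therefore do not obstruct the transversality argument; with these constraints the perturbation is genuinely supported in the chart and no correction region is needed. With that modification (plus the routine remark that a $C^1$-small perturbation preserves embeddedness and membership in $\SE^g(M,\SD)$), your argument goes through.
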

In consequence the set $\Sigma$ admits a Thom--Boardman stratification given by its tangencies with the fibration $S^1 \times K \to K$; we shall refer to a family $\gamma: K \lr \SE^g(M,\SD)$ satisfying the conclusions of the above lemma as a family with \emph{generic tangencies}.

\section{Reduction and Extension}\label{sec:proof}

Theorem \ref{thm:main} is an h-principle, and we shall prove it with the two-step strategy of reduction and extension; this is the approach taken in essentially all existing h-principles in the field \cite{BEM,CP3,Mu} and serves as an indication that the flexible aspects of Engel structures behave in line with the flexible aspects of contact topology, which is also illustrated in Section \ref{app}.

In explicit terms, Theorem \ref{thm:main} states that a family of formal Engel knots
$$\gamma: (\D^k,\partial\D^k) \to (\SE^f(M,\SD),\SE^g(M,\SD))$$
can be homotoped, relative to the holonomic family $\gamma|_{\partial\D^k}$, to a family $\tilde\gamma: \D^k \lr \SE^g(M,\SD)$. Note that by Lemma \ref{lem:genericity} we can and shall assume that the Engel family $\gamma|_{\partial\D^k}$ has generic tangencies with $\ker(\SD)$, which in a local Darboux chart $(\R^4(x,y,z,w),\SD_\std)$ is given by the line field $\partial_w$. Theorem \ref{thm:main} will be proven by homotoping $\gamma$ to an Engel family by using local $C^0$--small perturbations, from which the $C^0$-closeness of the resulting h-principle is immediate. Since the formal curves $\gamma$ are assumed to be embedded, they are guaranteed to remain so as long as these local perturbations do not introduce local intersections.

In our proof of the reduction process, which is the content of Subsection \ref{ssec:reduction}, we will need two technical lemmas; we have isolated these in the following subsection.

\subsection{Two Reduction Lemmas} \label{ssec:auxiliary}

The first technical lemma that we need reads as follows:
\begin{lemma}$($\cite[Proposition 29]{CP3}$)$\label{lem:triangulation}
Let $V$ be a manifold endowed with a line field $\SF$. Then
\begin{enumerate}
\item Any triangulation $(V,\ST)$ admits a subdivision $(V,\ST')$ such that, up to perturbation, every simplex in $\ST'$ is transverse to the given line field $\SF$,
\vspace{0.1cm}
\item There is a covering $\{\SU(\sigma_i)\}_{\sigma_i \in \ST}$ of $V$ satisfying the following properties:
\vspace{0.1cm}
\begin{itemize}
\item[-] There exist flowbox charts $\phi_i: (I \times \D^k,\coprod I \times \{k\}) \to (\SU(\sigma_i),\SF)$,
\vspace{0.1cm}
\item[-] $\phi_i(I \times \{k\})$ is $C^0$-small and $\phi_i(\{t\} \times \D^k)$ is $C^0$--close to each simplex $\sigma_i$,
\vspace{0.1cm}
\item[-] $\SU(\sigma_i) \cup \SU(\sigma_j) \neq \emptyset$ if and only if one is a subsimplex of the other,
\vspace{0.1cm}
\item[-] If $\sigma_i$ is not top dimensional, then $\phi_i(\cup_{\tau \subsetneq \sigma_i} \SU(\tau)) = I \times A_i$, with $A_i \subset \D^k$ some subset.
\end{itemize}
\end{enumerate}
\end{lemma}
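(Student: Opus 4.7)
The plan is to prove the two parts sequentially: part (1) is a standard transversality argument on simplices, and part (2) is a controlled flowbox construction organized along the resulting triangulation.

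For part (1), I would fix an auxiliary Riemannian metric on $V$ and apply iterated barycentric subdivision to $(V,\ST)$ until every star is contained in a normal chart where the line field $\SF$ is $C^0$-close to a parallel vector field $v$. Within such a chart, the condition that an affine $k$-simplex $\sigma$ satisfies $\SF_p \sse T_p\sigma$ at some point $p \in \sigma$ becomes (to leading order) the condition that $v$ lies in the $k$-plane spanned by $\sigma$, which is a closed, positive-codimension condition in the Grassmannian of $k$-planes in $\R^n$. Perturbing the vertices of the subdivision skeleton by skeleton in increasing dimension, a Thom-style general position argument eliminates all such tangencies rel.\ the previously transverse lower skeleton; the perturbations are kept arbitrarily $C^0$-small so that transversalities achieved at earlier stages are preserved.

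For part (2), once transversality holds, the tubular (flowbox) neighborhood theorem for line fields supplies, for each simplex $\sigma_i$, a neighborhood diffeomorphic to $I \times \D^{\dim \sigma_i}$ on which $\SF$ is the $\dd_t$-direction and $\sigma_i$ appears as a transverse section. I would organize the covering inductively in increasing dimension, generating every $\phi_i$ from a single local $\SF$-flow on each star: for a vertex $v$, pick a small flowbox interval through $v$; for an edge $e$ with endpoint $v$, extend by flowing $e$ for a short time so that $\SU(v) \sse \SU(e)$; and so on up to top-dimensional simplices. If at each step the transverse width and the longitudinal time of $\SU(\sigma_i)$ are chosen strictly smaller than those of the higher-dimensional flowboxes into which $\SU(\sigma_i)$ embeds, then $\SU(\sigma_i) \cap \SU(\sigma_j)$ is nonempty exactly when one of $\sigma_i,\sigma_j$ is a face of the other, and for each non-top $\sigma_i$ the union $\cup_{\tau \subsetneq \sigma_i} \SU(\tau)$ automatically appears in the form $I \times A_i$ because the $I$-directions of all the $\phi_\tau$ and of $\phi_i$ arise from the same local $\SF$-flow.

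The main obstacle I anticipate is combinatorial bookkeeping: one must propagate compatibility of the $I$-parametrizations and the $C^0$-closeness of $\phi_i(\{t\}\times\D^k)$ to $\sigma_i$ coherently across all simplex dimensions, while keeping the supports small enough to enforce the precise intersection pattern. Since each star is contractible and $\SF$ restricted to it is trivialized, this reduces to choosing a rapidly decreasing sequence of flow-times and transverse widths indexed by simplex dimension, which can be done explicitly provided the subdivision in part (1) is made sufficiently fine.
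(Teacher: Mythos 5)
The paper does not prove this lemma itself --- it cites \cite[Proposition 29]{CP3} and summarizes the argument as a triangulation obtained from Thurston's Jiggling Lemma followed by ``carefully thickening the simplices'' --- and your two-part plan (fine subdivision plus general-position perturbation of the vertices to remove tangencies with $\SF$, then inductively nested flowboxes over the skeleton with controlled widths and flow-times) is precisely that strategy. The proposal is correct in approach and matches the intended proof.
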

% \begin{proof}
% The existence of such a cover is proven in detail in \cite[Proposition 29]{CP3}. Just let us remark that (1) follows by Thurston's jiggling and (2) can be achieved by making $\SU(\sigma)$ be a flowbox neighbourhood of a disc obtained by slightly shrinking $\sigma$. The last property in particular follows by taking the neighbourhoods to be ``tall'' in the $\SF$ direction (in comparison to the other directions in which the simplices are thickened). 
% \end{proof}

For Lemma \ref{lem:triangulation} the reader is encouraged to consider the case $V=S^1 \times K$, $K$ a compact parameter space, and the foliation $\SF = \coprod_{k\in K} S^1 \times \{k\}$. In short, the covering in Lemma \ref{lem:triangulation} is obtained from a triangulation constructed via Thurston's Jiggling Lemma by carefully thickening the simplices.

The second technical lemma is more geometric in nature: it proves that an $\e$--Engel arc can be locally modified to be Engel at a point, and this result holds parametrically and relatively. This is in line with the h-principles for open manifolds where the geometry at the ends is not constrained \cite[Theorem 7.2.2]{EM}.

\begin{lemma} \label{lem:integration}
Let $\gamma_k: I \lr (\R^4,\SD_\std)$ be a $K$-family of $\e$-Engel arcs, with $K$ a compact space, such that $\{\gamma_k\}_{k\in \Op(A)}$ are Engel arcs for a compact subset $A\sse K$. Then, given $(t_k)_{k\in K}\in I$, there exist neighbourhoods $\{\Op(t_k)\}_{k\in K}$ and $\e$--Engel arcs $\{\tilde\gamma_k\}_{k\in K}$ such that:
\begin{itemize}
\item $\tilde\gamma_k(t) = \gamma_k(t)$ if $(t,k) \in I\times\Op(A)\cup (I\setminus\Op(t_k))\times K$,
\item $\tilde\gamma_k$ is an Engel arc in an even smaller neighbourhood of $t_k$, for all $k\in K$.
\end{itemize}
\end{lemma}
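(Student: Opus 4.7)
The argument is local and parametric. Working in a Darboux chart $(\R^4,\SD_\std)$ around $\gamma_k(t_k)$, I modify $\gamma_k$ on a small interval $\Op(t_k)\subset I$ using the Geiges front projection $\pi_\SF\circ\pi_G:(x,y,z,w)\mapsto(x,z)$. The parametric/relative condition with respect to $A$ is enforced by a cutoff $\chi:K\to[0,1]$ vanishing on $\Op(A)$ and equal to $1$ outside a slightly larger neighbourhood, used to interpolate between $\gamma_k$ (unmodified) and the construction below. Writing $\gamma_k(t)=(x_k(t),y_k(t),z_k(t),w_k(t))$, the $\e$--Engel hypothesis forces the front slope $dz_k/dx_k$ to differ from $w_k$ by $O(\e)$, and the $y$-defect $y_k(t)-y_k(t_k)-\int_{t_k}^{t}z_k\,dx_k$ to be of size $O(\e\cdot|\Op(t_k)|)$ across the interval.

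Pick nested subintervals $V_k\Subset\Op(t_k)$ around $t_k$ with $|V_k|\ll|\Op(t_k)|$. On $V_k$, define $\tilde\gamma_k$ to be the genuine Engel lift $\gamma_k^E$ of the front $(x_k,z_k)|_{V_k}$: retain $x$ and $z$, and set $w(t):=dz_k/dx_k$ and $y(t):=y_k(t_k)+\int_{t_k}^{t}z_k\,dx_k$. This is Engel by construction and agrees with $\gamma_k$ up to $O(\e)$ in $w$ and $O(\e|V_k|)$ in $y$. On the transition annulus $\Op(t_k)\setminus V_k$ I retain $x=x_k$, modify $z$ by adding a smooth planar bump to $(x_k,z_k)$, and interpolate $w$ and $y$ smoothly so that $\tilde\gamma_k$ matches $\gamma_k^E$ on $\partial V_k$ and $\gamma_k$ on $\partial\Op(t_k)$ in all four coordinates. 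The bump's signed area is chosen to absorb the $y$--mismatch (a single linear constraint, easily solved in the infinite-dimensional family of admissible bumps) and is localised in the annulus, of height $O(\e)$ and hence slope $O(\e/|\Op(t_k)|)$. Because $|V_k|/|\Op(t_k)|$ is small, the induced $C^1$ variations of $w$ and $y$ contribute only $o(\e)$ to the angular defect from $\SD$, so $\tilde\gamma_k$ remains $\e$--Engel on $\Op(t_k)$ and coincides with $\gamma_k$ on the boundary. Smooth dependence on $k$ and multiplication by $\chi(k)$ yield the required family.

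\textbf{Main obstacle.} The crux is matching all four coordinates $(x,y,z,w)$ at $\partial\Op(t_k)$ simultaneously: three of them are local conditions on the front $(x,z)$ and its slope, but the fourth, $y$, is the non-local integral $\int z\,dx$. Absorbing the resulting $y$-defect via a front bump while keeping it $C^1$-small enough to preserve the $\e$--Engel inequality, and doing so continuously in the parameter $k$ with full relativity in $A$, is the delicate quantitative estimate; it succeeds precisely because the defect and the slope tolerance are both of order $\e$.
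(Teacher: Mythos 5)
Your overall scheme (localize, pass to a planar projection, redefine the missing coordinates so the arc becomes Engel near $t_k$, then interpolate back in a collar) matches the paper's, but you chose the wrong projection, and this creates a genuine gap. You work in the front $(x,z)=\pi_\SF\circ\pi_G\circ\gamma_k$ and recover the fourth coordinate by differentiation, $w:=dz_k/dx_k$. This is undefined wherever $x_k'(t)=0$, i.e.\ wherever $\gamma_k$ is tangent (or, allowing for the $O(\e)$ tilt, nearly tangent) to the kernel $\ker(\SD_\std)=\langle\partial_w\rangle$: there the front has a singularity and the slope blows up. Nothing in the hypotheses excludes $t_k$ from being such a point; on the contrary, in Subsection \ref{sssec:tangencies} the lemma is invoked precisely along the tangency locus $\Sigma$, so your construction fails exactly where the lemma is most needed. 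The paper instead uses the Lagrangian projection $(x,w)=\pi_L\circ\pi_G\circ\gamma_k$, which is an immersion for \emph{every} $\e$-Engel arc, and recovers \emph{both} missing coordinates by integration ($\tilde z$ from $\int w\,dx$, then $\tilde y$ from $\int \tilde z\,dx$, with initial values at $a_k$). Both resulting defects at the right endpoint are $O(\e\cdot|b_k-a_k|)$, so a single linear interpolation in the $(y,z)$-coordinates over a collar, with $x$ and $w$ untouched, closes the arc up while remaining $\e$-Engel and $C^0$-small; no area-absorbing bump is needed, and the relative condition over $\Op(A)$ is automatic because integration reproduces any arc that is already Engel.

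Two further points, less fundamental but worth fixing. First, your quantitative claim does not close as stated: a bump of ``height $O(\e)$ and hence slope $O(\e/|\Op(t_k)|)$'' does \emph{not} contribute $o(\e)$ to the angular defect --- $\e/|\Op(t_k)|$ diverges as the interval shrinks, and the covering in the reduction step is taken arbitrarily fine. The estimate is repairable (the $y$-defect to absorb is only $O(\e|V_k|)$ with $|V_k|\ll|\Op(t_k)|$, so the bump can be taken with slope $O(\e|V_k|/|\Op(t_k)|^2)$, which is small once $|V_k|$ is chosen appropriately), but as written the assertion is false. Second, ``multiplication by $\chi(k)$'' of curves only makes sense as a convex combination in the chart, whose $\e$-Engel property would itself need checking; in any case it is unnecessary, since over $\Op(A)$ the arcs are already Engel and any reconstruction by integration returns $\gamma_k$ identically.
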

\begin{proof}
For each $k\in K$, consider the Lagrangian projection $(\pi_L \circ \pi_G \circ \gamma_k)(t)=(x_k(t),w_k(t))$ and choose an arbitrarily small neighbourhood $[a_k,b_k]$ of $t_k$. In that neighbourhood declare the coordinates $(\tilde z_k(t),\tilde y_k(t))$ to be defined by the definite integration on $(x_k(t),w_k(t))$ with initial value $(z_k(a_k),y_k(a_k))$; by construction this defines an Engel arc $\tilde\gamma_k$ over $[a_k,b_k]$. Let us choose $\delta\in\R^+$ small enough and linearly interpolate back to $\gamma_k$ in the larger neighbourhood $[a_k-\delta,b_k+\delta]$:
$$\tilde\gamma_k(t)=\dfrac{a_k-t}{\delta}\gamma_k(t) + \dfrac{a_k+\delta-t}{\delta}\tilde\gamma_k(t),\quad t\in[a_k-\delta,a_k],$$
$$\tilde\gamma_k(t)=\dfrac{t-b_k}{\delta}\gamma_k(t) + \dfrac{b_k+\delta-t}{\delta}\tilde\gamma_k(t),\quad t\in[b_k,b_k+\delta].$$
Since the $x$ and $w$ coordinates of $\tilde\gamma_k$ and $\gamma_k$ already agree, this amounts to interpolating in the $y$ and $z$ coordinates. This linear interpolation can also be readily made smooth at the ends by inserting the appropriate cut-off functions.

It holds that $\gamma_k=\tilde\gamma_k$ if $k\in \Op(A)$, and the two families coincide on the complement of the neighbourhood $\Op(t_k)=[a_k-\delta,b_k+\delta]$. If $|a_k-b_k|$ and $\delta$ are chosen small enough, the perturbation is $C^0$--small. 
\end{proof}

Note that if the initial $K$-family $\gamma_k$ in Lemma \ref{lem:integration} is embedded then the resulting family $\tilde\gamma_k$ can be assumed to be embedded. This is the form in which the lemma shall be used in the reduction process.

\subsection{Reduction} \label{ssec:reduction}

The first step in the reduction is to apply Proposition \ref{prop:epsilonhprinciple} to the formal Engel $K$-family $\{\gamma_k\}$ with $\e>0$ small but fixed; this allows us to reduce to the case where $\{\gamma_k\}_K\in \SE^\e(M,\SD)$. By applying Lemma \ref{lem:genericity} we perturb the family $\{\gamma_k\}$ to yield a $K$--family of $\e$-Engel knots with generic tangencies over $k \in \Op(\dd K)$, which we still denote by $\{\gamma_k\}$. In order to conclude Theorem \ref{thm:main} it suffices to work with $K=\D^k$, as we shall do onwards.

% It will be convenient to think of this family as a fibered map 
% \[ \gamma(t,k) = \gamma_k(t): \NS^1 \times \D^k \to (M,\SD). \]

% Let us fix a finite covering $\{U_i\}$ of $M$ by Darboux balls. If the constant $\e$ is sufficiently small then each curve $\gamma(k)$ projects to an immersed $\e$--legendrian under the Geiges projection of every $U_i$ it intersects.

\subsubsection{Euclidean Reduction} \label{sssec:triang1}

Let us consider the Engel 4-manifold $(M,\SD)$ and apply Lemma \ref{lem:triangulation} to produce an arbitrarily fine covering $\SU(\sigma_i)$ of $S^1 \times \D^k$. Denote the corresponding charts by $\phi_i: I \times \D^k \lr \SU(\sigma_i)$ and let us work inductively along the skeleton $\{\sigma_i\}$. The $\D^k$--families $\gamma_k\circ\phi_i$ have image in a Darboux ball $U_i$, which has a corresponding Geiges projection $\pi_G$. Up to a $C^\infty$-perturbation, we may suppose that $\pi_L\circ\pi_G\circ\gamma_k\circ\phi_i$ is in general position with respect to the line field $\ker(\SD_\std)=\langle\dd_w\rangle$.

For each simplex $\sigma_i$ below the top-dimension and $k\in \D^k$, we can apply Lemma \ref{lem:integration} to the family $\gamma_k \circ \phi_i$. This effectively modifies the family $\{\gamma_k\}$ over the neighbourhood $\SU(\sigma_i)$, turning the $\gamma_k$ into Engel arcs over possibly smaller domains, which we still denote by $\SU(\sigma_i)$. We also need to apply Lemma \ref{lem:integration} relatively to the boundary of the model, which we do. The relative nature of Lemma \ref{lem:integration} implies that this deformation is relative to the previous steps and, since the covering is arbitrarily fine, Lemma \ref{lem:integration} ensures that the deformation is $C^0$-small. This implies that the embedding property is preserved. This argument hence reduces the set-up to the top-dimensional simplices, each of which has image in a Darboux ball $(M,\SD)=(U \subset \R^4,\SD_\std)$.

% To ensure the generic tangency condition, we may $C^\infty$--perturb $\pi_\lagrangian \circ \pi_\Geiges \circ \gamma \circ \phi_i$ to be in general position with respect to $\partial_w$ before we apply Lemma \ref{lem:integration}.

\subsubsection{Tangency locus} \label{sssec:tangencies}
Consider a $\D^k$-family $\gamma_k: I \lr (\R^4,\SD_\std)$ of $\e$-Engel knots such that for $(t,k) \in \Op(\partial(I\times\D^k))$ the $\{\gamma_k\}$ are Engel knots with generic tangencies. Let us argue that we can reduce to the case where the $\e$-Engel knots are genuine Engel knots near the locus of points where the family $\{\gamma_k\}$ becomes tangent to $\ker(\SD_\std)$.
% In fact, it suffices for $\{\gamma_k\}$ to appear transverse to $\ker\SD_\std$ from the perspective of the projection $\pi_G:\R^4(x,y,z,w)\lr\R^3(x,z,w)$.

Let $\Sigma=\{(t,k): T(\pi_L \circ \pi_G \circ\gamma_k)(t)\cap\langle\dd_w\rangle\neq\{0\}\}$ be the set of tangencies of the Lagrangian projection with $\partial_w$, which is the projection of the kernel $\ker(\SD_\std)$. $\Sigma$ behaves as a smooth manifold, up to $C^\infty$--perturbating $\gamma_k$ relative to the boundary of the model, thanks to the Thom-Boardman stratification. Apply Thurston's Jiggling Lemma to construct a triangulation $\ST_\Sigma$ of $I\times\D^k$ which is transverse to the fibres of $I\times\D^k \lr \D^k$, restricts to a triangulation of $\Sigma$ away from the locus where $\Sigma$ is tangent to the fibers, and is $C^0$--close to a triangulation of $\Sigma$ in that locus. In line with Lemma \ref{lem:triangulation}, $\ST_\Sigma$ induces a covering of $\Sigma$ by flowboxes, since $C^0$--closeness suffices for the construction. Then Lemma \ref{lem:integration} applies again to deform, inductively on the covering, the $\e$-Engel arcs $\gamma_k$ to Engel arcs in a small neighbourhood of $\Sigma$.

In order to preserve the above deformation, triangulate $I\times\D^k$ once more, this time relative to both $\partial(\Op(\Sigma))$ and the boundary of the model. The reduction scheme in Subsection \ref{sssec:triang1} applies again and reduces the initial problem of deforming a $K$-family $\{\gamma_k\}:I\lr(M,\SD)$ of formal Engel knots to the problem of studying a $\D^k$-family of $\e$-Engel knots $\gamma_k: I \lr U \subset (\R^4,\SD_\std)$ satisfying
\begin{itemize}
\item[-] $\{\gamma_k\}$ are Engel knots in the boundary of the model $(t,k) \in \Op(\partial (I \times \D^k))$, 
\item[-] $\pi_\SF \circ \pi_G \circ \gamma_k$ is an embedded interval transverse to $\partial_z$; in particular, it contains no cusps.
\end{itemize}
Note that the second condition is guaranteed by the deformation near the tangency locus $\Sigma$, since the front projection $\pi_\SF:\R^3(x,z,w)\lr\R^2(x,z)$ collapses the Legendrian line field $\langle\dd_w\rangle\sse(\R^3(x,z,w),\xi_\std)$; embeddedness of the projection follows by choosing a fine enough triangulation in the above argument.

In short, Proposition \ref{prop:epsilonhprinciple} and Subsections \ref{sssec:triang1} and \ref{sssec:tangencies} prove that it is possible to work with $\e$-Engel arcs in arbitrarily small Darboux charts that, under the front of the Geiges projection, map to almost horizontal embedded intervals.

\subsection{Extension} \label{ssec:extension}
Let $\gamma_k: I \lr U \subset (\R^4,\SD_\std)$ be a $K$-family of $\e$-Engel knots obtained via the reduction process in Subsection \ref{ssec:reduction}; a crucial fact is that the Darboux ball $U$ might be small and any further deformations we perform must remain inside of it. In addition, these deformations must be $C^0$-small in order for them not introduce intersection points with other families, and should not introduce self--intersections within the local model. Let us explain how to produce these deformations and thus conclude the proof of Theorem \ref{thm:main}.

We will describe the deformation process in the front projection $\pi_\SF\circ\pi_G$, where a given family $\gamma_k(t) = (x_k(t),y_k(t),z_k(t),w_k(t))$ projects to $(x_k(t),z_k(t))$. As discussed in Subsection \ref{sec:pre}, in the region where the curves $\gamma_k$ are Engel the two remaining coordinates can recovered by derivation and integration
$$w_k(t) = \frac{z_k'(t)}{x_k'(t)},\qquad y_k(t) = y_k(t_0) + \int_{t_0}^t z_k(t) dx_k(t).$$
These operations for $\e$-Engel knots only approximate the actual coordinates of $\gamma_k$ with an $\e$-error. Our method first continuously adjusts the slope of the front in order to $C^0$--approximate $w_k$ and then adjusts its integral so as to $C^0$--approximate $y_k$.

\subsubsection{Derivative adjustment.} First, note that a wrinkled embedding \cite{EM2} can approximate any given slope, which for instance proves that any smooth knot can be $C^0$--approximated by a Legendrian knot \cite[Theorem 3.3.1]{Ge0}. In $3$--dimensional contact topology introducing a wrinkle in the front cannot be achieved by a Legendrian isotopy \cite{Ch,Mu}; that is, Legendrian stabilization changes the Legendrian isotopy type. The crucial fact is to realize that in Engel topology it is possible to continuously introduce wrinkles in the region $k \in \Op(\partial\D^k)$ while remaining within the category of Engel knots. This is depicted in Figure \ref{fig:EngelStab}, where the second step $\mathcal{S}$ describes an Engel isotopy; let us provide the details.

\begin{center}
\begin{figure}[h!]
\centering
  \includegraphics[scale=0.7]{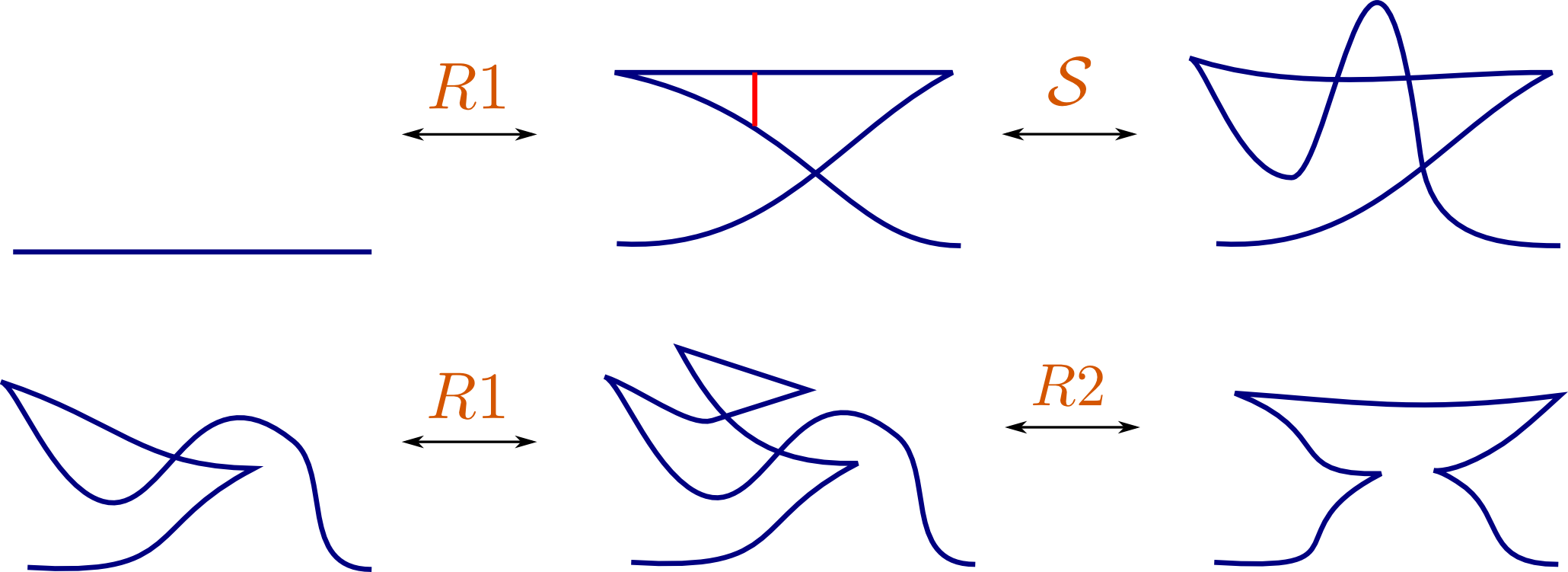}
  \caption{Wrinkling the Engel family $\{\gamma_k\}$.}
  \label{fig:EngelStab}
\end{figure}
\end{center}

Let $N\in\mathbb{N}$ and $t_i = i/N$, $i=1,\dots,N-1$. The given family $\{\gamma_k\}$ of $\e$-Engel knots consists of Engel knots for $k\in\Op(\dd\D^k)$; they are are parametrized by $[0,1]$. For each time $t_i\in[0,1]$ we introduce a Reidemeister I move in such a manner that these Reidemeister I are disjoint from each other and have slope arbitrarily close to that of $\gamma_k$. In fact, since this is a legitimate move for a Legendrian front projection, we can assume that these Reidemeister I appear parametrically in the region $|k|\in[1-\delta,1]$, for a small $\delta\in\R^+$, thus producing a family $\gamma^{(1)}_k$ which coincides with $\gamma_k$ for $|k|\in\Op(\{1\})$ and agrees with $\gamma_k$ with added Reidemeister I moves for $|k|\in\Op(\{1-\delta\})$, as in the second frame in Figure \ref{fig:EngelStab}.

The next step is to add wrinkles to the family $\gamma^{(1)}_k$ and enlarge them to $C^0$-approximate the desired slope $\{w_k\}$; this is achieved by considering the vertical red segment depicted in Figure \ref{fig:EngelStab} and pulling the strand at the lower end of the segment upwards. The Legendrian lift to $(\R^3(x,z,w),\xi_\std)$ of this movie of fronts produces a family of Legendrians, one of which is not embedded. This happens when the lower strand crosses the upper end of the red segment; since the upper and lower strands have identical slope at the intersection point, the Legendrian lift is only immersed. However, in Engel topology we still have the $y_k$ coordinate, which can be used to ensure that the lift of the movie of fronts to $(\R^4(x,y,z,w),\SD_\std)$ is a family of embedded Engel knots.

Indeed, the move $\mathcal{S}$ for Engel knots is admissible, i.e.~ it preserves the Engel knot condition (and in particular embeddedness) when it is applied to our family $\{\gamma_k\}$ for $k \in \Op(\partial(\D^k))$; in addition it can be performed in an arbitrarily small Darboux chart. In order to verify that the Engel condition is preserved observe that the slope of $(x_k(t),z_k(t))$ can be assumed to be $C^0$-close to the original one and we can directly adjust $w_k(t)$ in the given Darboux chart to still be given by derivation; similarly, the change in the integral giving $y_k(t)$ can be made arbitrarily small and the projection can be adjusted slightly to ensure that $y_k$ is modified relative to the ends $t=\{0,1\}$. Regarding embeddedness, the projection $\pi_G \circ \gamma_k$ describes an embedded Legendrian except when the self--tangency appears in the front, but the integral of $zdx$ over the segment connecting the two self--tangency points is necessarily non--zero, which implies that their $y_k$--coordinates are different.

In consequence we can deform the family $\{\gamma^{(1)}_k\}$ to a family $\{\gamma^{(2)}_k\}$ such that
\begin{itemize}
 \item[-] $\{\gamma^{(1)}_k\}=\{\gamma^{(2)}_k\}$ for $|k|\in[1-\delta,1]$.
 \item[-] $\{\gamma^{(2)}_k\}$ coincides with $\{\gamma^{(1)}_k\}$ with the added move $\mathcal{S}$, for $|k|\in[1-2\delta,1-\delta]$.
\end{itemize}
In the second item, the move $\mathcal{S}$ is inserted parametrically: the parameter in the movie coincides with $|k|\in[1-2\delta,1-\delta]$ so that for $|k|\in\Op(\{1-\delta\})$ we have the second front in Figure \ref{fig:EngelStab} and for $|k|\in\Op(\{1-2\delta\})$ the front is given by the third frame.

To conclude this adjustment, choose $\delta\in\R^+$ sufficiently small so that the $\{\gamma_k\}$ are Engel knots for $|k|\in[1-3\delta,1]$. Then, perform the remaining steps depicted in Figure \ref{fig:EngelStab} parametrically in the region $|k|\in[1-3\delta,1-2\delta]$ to obtain a deformed family $\{\gamma^{(3)}_k\}$. Then enlarge the wrinkles in $\{|k|\leq1-3\delta\}$ to $C^0$-approximate the desired slopes; it can be done both parametrically in $k\in\D^k$ and relative to $\{|k|\geq1-3\delta\}$ and $\{t \in \Op(0,1)\}$. This is the only part in which we actually need a large number of wrinkles and thus $N\in\mathbb{N}$ must be chosen initially sufficiently large to ensure that the slope is $C^0$-approximated. Once the wrinkled front projections $C^0$-approximate the required derivatives, we redefine the fourth Engel coordinate as $w_k(t) = \frac{z_k'(t)}{x_k'(t)}$ for all $k\in\D^k$ and denote the resulting family by $\{\gamma^{(4)}_k\}$.

\subsubsection{Area adjustment.} Given the previous subsections, we suppose that $\gamma^{(4)}_k$ are $\e$-Engel knots whose $\pi_G$-projections are legitimate legendrian immersions and differ from being genuine Engel knots due to the behaviour of their $\{y_k\}$ coordinate.

The $\{y_k\}$ should be given by the integral of the 1-form $zdx$. We can insert in the family $\{\gamma_k^{(4)}\}$ a further set of Reidemeister I moves at times $\tilde t_i = (2i+1)/2N$, for $1\leq i\leq N-1$, parametrically in $|k|\in[1-\delta,1]$. These Reidemeister I moves come in two collections: the first collection bounds area $A$ and the second collection area $-A$, with $A\in\R^+$ small. This yields a new family $\{\gamma_k^{(5)}\}$ of $\e$-Engel knots which are still Engel for $|k|\in[1-3\delta,1]$. Each collection is essentially an arbitrarily long sequence of nested Reidemeister I moves; the insertion is depicted in Figure \ref{fig:EngelArea}.

These pairs of nested Reidemeister I moves of cancelling area serve now as area controllers. Note that the nesting condition implies that we can insert arbitrarily many of them and, as long as they are small enough, they do not interact with other regions of the curve. Once these area controllers are available we can adjust the integral $\int z_k(t)dx_k(t)$ as we please and, since $N$ can be taken arbitrarily large, this adjustment is performed in an arbitrarily dense collection of points, so we can $C^0$-approximate $y_k(t)$. The final family $\{\gamma_k^{(6)}\}$ is obtained by deforming the areas of each Reidemeister {\it configuration} in order to $C^0$-approximate the coordinates $\{y_k\}$ as we move in the parameter space $k\in K$, always declaring $\{y_k^{(6)}\}$ to be given by the area integral. The area and slope adjustments introduce no local intersections and they $C^0$--approximate the original family of $\e$--Engel knots: we have indeed produced the desired family of Engel knots and the proof Theorem \ref{thm:main} is complete.\hfill$\square$

\begin{center}
\begin{figure}[h!]
\centering
  \includegraphics[scale=0.7]{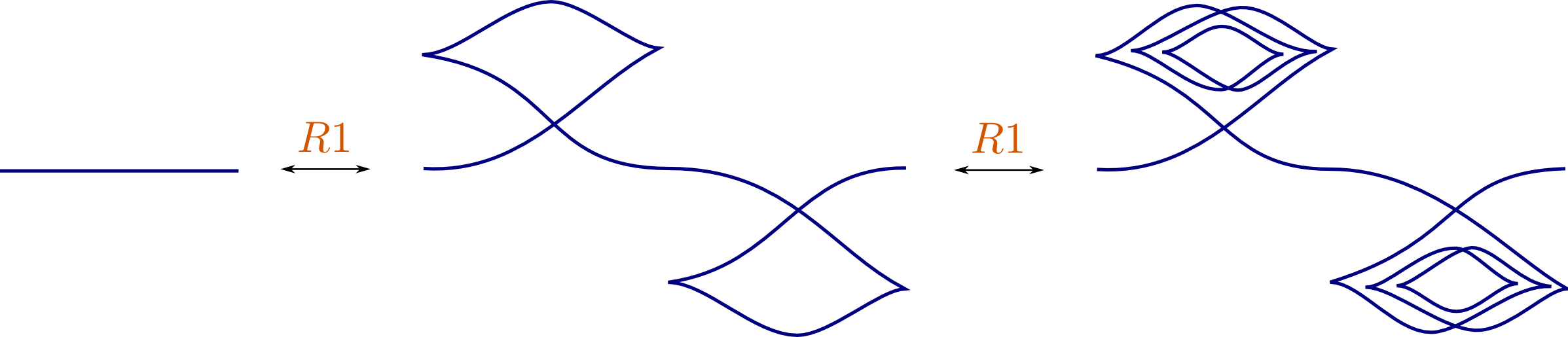}
  \caption{Area controllers modifying the Engel family $\{\gamma_k^{(4)}\}$ to $\{\gamma_k^{(5)}\}$.}
  \label{fig:EngelArea}
\end{figure}
\end{center}

\section{Parametric Fuchs-Tabachnikov Theorem}\label{app}

The methods presented in the proof of Theorem \ref{thm:main} immediately prove the following statement, which is a parametric version of the Fuchs-Tabachnikov Theorem \cite[Theorem 4.4]{FT}; this generalization is natural from the h-principle perspective and it can be deduced from \cite{EM2,Mu}. Its proof is perfectly in line with ours and illustrates the relation between Engel knots and Legendrian knots; let us state it here for completeness:

\begin{theorem}\label{thm:ft}
Let $(Y,\xi)$ be a contact 3-manifold and $\gamma_k: S^1 \lr (Y,\xi)$ a $\D^k$-family of formal Legendrian knots, with $\{\gamma_k\}$ Legendrian knots for $k \in\Op(\partial\D^k)$. Then, there exists a $\D^k$-family $\{\tilde\gamma_k\}$ of Legendrian knots such that $\{\tilde\gamma_k\}$, $k\in\Op(\partial\D^k)$, is obtained from $\{\gamma_k\}$ by stabilization.
\end{theorem}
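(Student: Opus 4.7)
The plan is to mirror the reduction--extension strategy of the proof of Theorem \ref{thm:main}, observing that the contact setting is strictly simpler: in the front projection of a Legendrian in $(\R^3,\xi_\std)$ the missing coordinate $z$ is determined by pure differentiation $z=y'/x'$, with no integral to match, so the entire ``area adjustment'' of Subsection \ref{ssec:extension} becomes vacuous.

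First I would establish the contact analogue of Proposition \ref{prop:epsilonhprinciple}. Define the space $\SE^\e(Y,\xi)$ of $\e$--Legendrian knots $\gamma$ satisfying $\measuredangle(\gamma'(t),\xi)\leq\e$. The fiberwise defining condition is a cone of lines making angle at most $\e$ with a given 2--plane, which is an open ample relation, so Gromov's h--principle applies verbatim and the scanning map $\SE^\e(Y,\xi)\lr\SE^{\e,f}(Y,\xi)$ is a weak equivalence. This reduces the given $\D^k$--family $\{\gamma_k\}$ to a family of $\e$--Legendrian knots, still genuinely Legendrian for $k\in\Op(\dd\D^k)$.

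Next I would run the reduction arguments of Subsections \ref{sssec:triang1} and \ref{sssec:tangencies}. The contact analogue of Lemma \ref{lem:integration} is essentially trivial: given an $\e$--Legendrian arc $(x_k(t),y_k(t),z_k(t))$ in a Darboux chart, redefine $y_k$ on a small subinterval by the definite integral $y(t)=y_k(a)+\int_a^t z_k\, dx_k$ and interpolate back to the original arc via a cut-off, exactly as in the proof of Lemma \ref{lem:integration}. Combined with the tangency locus argument handling cusps in the front, this reduces the problem to a $\D^k$--family of $\e$--Legendrian arcs in a fixed Darboux chart of $(\R^3,\xi_\std)$ whose front projections are embedded intervals transverse to $\dd_y$.

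The extension step is then the derivative adjustment of Subsection \ref{ssec:extension} applied to the Legendrian front. I would insert $N$ Reidemeister I stabilizations parametrically in the boundary region $|k|\in[1-\delta,1]$, enlarge them in the interior to $C^0$--approximate the target slopes $z_k(t)$, and finally redefine $z_k := y_k'/x_k'$ to obtain a genuine Legendrian family; the stabilizations introduced parametrically near $\dd\D^k$ are precisely the ``stabilizations'' appearing in the statement. The main technical point to verify is that the Reidemeister I move, inserted parametrically, lifts to an embedded Legendrian throughout the deformation. This is strictly easier than in the Engel case: since the $y$--coordinate is already visible in the front, embeddedness of the lift reduces to keeping each zigzag small in the $x$--direction, uniformly in $k$ once the covering from Lemma \ref{lem:triangulation} is taken sufficiently fine. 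No area adjustment is required.
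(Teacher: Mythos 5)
Your argument is correct in outline, but it runs through a genuinely different projection than the paper's. The paper's proof of Theorem \ref{thm:ft} works in the \emph{Lagrangian} projection, where the missing coordinate is recovered as the action integral: there the derivative adjustment is vacuous and \emph{all} of the work is done by the area controllers of Subsection \ref{ssec:extension} (pairs of nested loops of area $\pm A$), which fix the closing-up condition, the $C^0$--approximation of the integrated coordinate, and embeddedness at double points. You instead work in the \emph{front} projection, where the missing coordinate is the slope: there the area adjustment is vacuous and all of the work is done by the derivative adjustment (zigzags approximating the prescribed slope, followed by redefining the slope coordinate). The two routes are dual to one another, and both isolate the same crucial phenomenon in the same place: the local model that must be inserted throughout the parameter space --- a zigzag in the front, a small loop in the Lagrangian projection --- is a Legendrian \emph{stabilization}, not an isotopy, and this is exactly why the boundary family can only be recovered up to stabilization. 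Your route is closer to the classical, non-parametric proof of Fuchs--Tabachnikov; the paper's choice of the Lagrangian projection is made to mirror the Engel area adjustment and to make the contrast with Theorem \ref{thm:main} transparent. One point of language you should fix: you say you ``insert $N$ Reidemeister I stabilizations parametrically in the region $|k|\in[1-\delta,1]$,'' but a stabilization cannot be born through a family of embedded Legendrians (at the birth moment the lift of the front fails to be an immersion); the correct statement is that for $k\in\Op(\partial\D^k)$ the family $\{\tilde\gamma_k\}$ is \emph{defined} to be the stabilized family, with no isotopy connecting it to $\{\gamma_k\}$ --- which is precisely the allowance made in the statement of the theorem, and is the step that genuinely distinguishes the contact case from the Engel one, where the analogous insertion (the move $\mathcal{S}$) \emph{is} an Engel isotopy. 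Finally, note that your conventions ($\xi=\ker(dy-z\,dx)$, front onto $(x,y)$, slope $z=y'/x'$) differ from the paper's ($\xi=\ker(dz-y\,dx)$); they are internally consistent, but the integration formula in your analogue of Lemma \ref{lem:integration} is the Lagrangian-projection recovery in that convention, so you are in fact already using both projections, exactly as the Engel proof does.
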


Exactly as in the proof of Theorem \ref{thm:main} we can first reduce to the case where $\{\gamma_k\}$ are $\e$-Legendrian knots and $(Y,\xi) = (\R^3,\ker\{dz-ydx\})$, and then consider the Lagrangian projection where the $z$-coordinate is recovered as the action $\int ydx$. In the projected immersed curve we introduce small pairs of (nested families of) loops with areas $\pm A$ which serve as area controllers; the crucial difference with the Engel case is that these small pairs of loops cannot be introduced parametrically: they correspond to Legendrian stabilizations, not isotopies. To conclude the argument, we are therefore forced to stabilize the family $\{\gamma_k\}$ for $k \in\Op(\partial\D^k)$ and then the extension argument from Section \ref{sec:proof} above proves Theorem \ref{thm:ft}. \hfill$\square$
%%%%%%%%%%%%%%%%%%%%%%%%%%%%%%%%%%%%%%%%%%%%%%%%%%%%%%%%%%%%%%%%%%%%%%%%%%%%%%%%%%%%%%%%%%%%%%%%%%%%%%%%%%%%%%%%%%%%%%%%%%%%%%%%%%%%%%%
%%%%%%%%%%%%%%%%%%%%%%%%%%%%%%%%%%%%%%%%%%%%%%%%%%%%%%%%%%%%%%%%%%%%%%%%%%%%%%%%%%%%%%%%%%%%%%%%%%%%%%%%%%%%%%%%%%%%%%%%%%%%%%%%%%%%%%%

%%%%%%%%%%%%%%%%%%%%%%%%%%%%%%%%%%%%%%%%%%%%%%%%%%%%%%%%%%%%%%%%%%%%%%%%%%%%%%%%%%%%%%%%%%%%%%%%%%%%%%%%%%%%%%%%%%%%%%%%%%%%%%%%%%%%%%%
%%%%%%%%%%%%%%%%%%%%%%%%%%%%%%%%%%%%%%%%%%%%%%%%%%%%%%%%%%%%%%%%%%%%%%%%%%%%%%%%%%%%%%%%%%%%%%%%%%%%%%%%%%%%%%%%%%%%%%%%%%%%%%%%%%%%%%%
%%%%%%%%%%%%%%%%%%%%%%%%%%%%%%%%%%%%%%%%%%%%%%%%%%%%%%%%%%%%%%%%%%%%%%%%%%%%%%%%%%%%%%%%%%%%%%%%%%%%%%%%%%%%%%%%%%%%%%%%%%%%%%%%%%%%%%%
\end{document}